\newcommand{\UU}{\mathcal{U}}
\newcommand{\B}{\mathcal{B}}
\newtheorem{theorem}{Theorem}
\newtheorem{proposition}{Proposition}
\newtheorem{lemma}{Lemma}
\theoremstyle{definition}
\newtheorem{example}{Example}
\title{Thin subsets of groups}
\author{I.V.Protasov, S. Slobodianiuk}
\date{}
\begin{document}

\maketitle

\begin{abstract}
For a group $G$ and a natural number $m$, a subset $A$ of $G$ is called $m$-thin if, for each finite subset $F$ of $G$, there exists a finite subset $K$ of $G$ such that $|Fg\cap A|\leqslant m$ for every $g\in G\setminus K$. We show that each $m$-thin subset of a group $G$ of cardinality $\aleph_n$, $n= 0,1,\ldots$ can be partitioned into $\leqslant m^{n+1}$ 1-thin subsets. On the other side, we construct a group $G$ of cardinality $\aleph_\omega$ and point out a 2-thin subset of $G$ which cannot be finitely partitioned into 1-thin subsets.
\end{abstract}

 Let $G$ be a group, $\kappa$ and $\mu$ be cardinals, $|G|\geqslant \kappa \geqslant \aleph_0$ and $\mu \leqslant \kappa$, $[G]^{<\kappa} = \{X\subset G: |X| < \kappa\}$.

We say that a subset $A$ of $G$ is {\it $(\kappa,\mu)$-thin } if, for every $F\in [G]^{<\kappa}$, there exists $K\in [G]^{<\kappa}$ such that 

$$ |Fg\cap A|\leqslant \mu$$
for each $g\in G\setminus K$.

If $\kappa$ is regular then $A$ (see Lemma 1) is $(\kappa,1)$-thin if and only if, for each $g\in G$, $g\ne e$, $e$ is the identity of $G$, we have 
$$|\{a\in A: ga\in A\}|< \kappa.$$

An $(\aleph_0,1)$-thin subset is called {\it thin }. For thin subsets, its modifications and applications see \cite{b1}, \cite{b2}, \cite{b3}, \cite{b4}, \cite{b5}, \cite{b6}, \cite{b7}. For $m\in \mathbb{N}$, the $(\aleph_0, m)$-thin subsets appeared in \cite{b3} under name {\it $m$-thin} in attempt to characterize the ideal in the Boolean algebra of subsets of $G$ generated by the family of thin subsets of $G$. If a subset $A$ of $G$ is a union of $m$ thin subsets then $A$ is $m$-thin. On the other hand, if $G$ is countable and $A$ is $m$-thin then $A$ can be partitioned into $\leqslant m$ thin subsets. Thus, the ideal generated by thin subsets of a countable group $G$ coincides with the family of all $m$-thin, $m\in \mathbb{N}$ subsets of $G$. Does this characterization remain true for all infinite groups? In other words, can every $m$-thin subset of an uncountable group $G$ be partitioned in $m$ (finitely many) thin subsets? In this paper we give answer to these questions.

The paper consists of 5 sections. In the first section we see that the thin subsets can be defined in the much more general context of balleans, the counterparts of the uniform topological spaces. From this point of view, a thin subset is a counterpart of a uniformly discrete subset of a uniform space. As a corollary of some ballean statement (Theorem 1), we get that, for each infinite regular cardinal $\kappa$ and each $m\in \mathbb{N}$, every $(\kappa, m)$-thin subset of a group $G$ of cardinality $\kappa$ can be partitioned into $\leqslant m$ $(\kappa,1)$-thin subsets.

In section 2 we extend this statement showing (Theorem 3) that, for every infinite regular cardinal $\kappa$, $m\in \mathbb{N}$ and $n\in \omega$, each $(\kappa, m)$-thin subset of a group $G$ of cardinality $\kappa^{+n}$ can be partitioned into $\leqslant m^{n+1}$ $(\kappa,1)$-thin subsets. Here, $\kappa^{+0} = \kappa, \kappa^{+(n+1)}= (\kappa^n)^+$. In particular, every $m$-thin subset of a group $G$ of cardinality $\aleph_n$ can be partitioned into $\leqslant m^{n+1}$ thin subsets. Clearly, in this case the ideal generated by thin subsets also coincides with the family of all $m$-thin subsets, $m\in \mathbb{N}$. In Theorem 4 we describe $(\kappa, \mu)$-thin groups that can be partitioned into $\mu$ $(\kappa,1)$-subsets.

In section 3 one can find two auxiliary combinatorial theorems (of independent interest!) on coloring of the square $G\times G$ of a group $G$ which will be used in the next section.

Answering a question from \cite{b3}, G.~Bergman constructed a group $G$ of cardinality $\aleph_2$ and a 2-thin subset $A$ of $G$ which cannot be partitioned into two thin subsets. With kind permission of the author, we reprint in section 4 his letter with this remarkable construction (Example 1). Then we modify the Bergman's construction to show (Example 2) that for each natural number $m\geqslant 2$ there exist a group $G_n$ of cardinality $\aleph_n$, $ n = \frac{m(m+1)}{2} -1$, and a 2-thin subset $A$ of $G$ which cannot be partitioned into $m$-thin subsets. And finally (Example 3), we point out a group $G$ of cardinality $\aleph_\omega$ and a 2-thin subset of $G$ which cannot be finitely partitioned into thin subsets.

We conclude the paper with some observations on interplay between thin subsets and ultrafilters in section 5.

\section{Ballean context}

A {\it ball structure} is a triple $\mathcal{B}=(X,P,B)$, where $X$, $P$ are non-empty sets and, for
any $x\in X$ and $\alpha\in P$, $B(x,\alpha)$ is a subset of $X$
which is called a \emph{ball of radius} $\alpha$ around $x$. It is
supposed that $x\in B(x,\alpha)$ for all $x\in X$ and $\alpha\in P$.
The set $X$ is called the {\it support} of $\mathcal{B}$, $P$ is
called the {\it set of radii}. Given any $x\in X, A\subseteq X,
\alpha\in P$ we put
$$
B^*(x,\alpha)=\{y\in X:x\in B(y,\alpha)\},\
B(A,\alpha)=\bigcup_{a\in A}B(a,\alpha).
$$

Following \cite{b8}, we say that a ball structure $\mathcal{B}=(X,P,B)$ is a {\it ballean} if 
\begin{itemize}
\item for any $\alpha,\beta\in P$,
there exist $\alpha',\beta'$ such that, for every $x\in X$,
$$B(x,\alpha)\subseteq B^*(x,\alpha'),\ B^*(x,\beta)\subseteq B(x,\beta');$$
\item for any $\alpha,\beta\in P$,
there exists $\gamma\in P$ such that, for every $x\in X$,
$$B(B(x,\alpha),\beta)\subseteq B(x,\gamma).$$
\end{itemize}

We note that a ballean can also be defined in terms of entourages of diagonal in $X\times X$. In this case it is called a coarse structure \cite{b9}.

A ballean $\B$ is called {\it connected} if, for any $x,y\in X$, there exists $\alpha\in P$ such that $y\in B(x,\alpha)$. All balleans under consideration are supposed to be connected. Replacing each ball $B(x,\alpha)$ to $B(x,\alpha) \cap B^*(x,\alpha)$, we may suppose that $B(x,\alpha) = B^*(x,\alpha)$ for all $x\in X,\alpha\in P$. A subset $Y\subseteq X$ is called {\it bounded} if there exist $x\in X$ and $\alpha\in P$ such that $Y\subseteq B(x,\alpha)$.

We use a preordering $\leqslant$ on the set $P$ defined by the rule: $\alpha \leqslant \beta$ if and only if $B(x,\alpha)\subseteq B(x,\beta)$ for every $x\in X$. A subset $\P' \subseteq P$ is called {\it cofinal} if, for every $\alpha\in P$, there exists $\alpha' \in P'$ such that $\alpha \leqslant \alpha'$. A ballean $\B$ is called {\it ordinal} if there exists a cofinal subset $P'\subseteq P$ well ordered by $\leqslant$.

Let $\B = (X,P,B)$ be a ballean, $\mu$ be a cardinal. We say that a subset $A\subseteq X$ is {\it $\mu$-thin} if, for every $\alpha \in P$, there exists a bounded subset $Y\subseteq X$ such that $|B(x,\alpha)\cap A|\leqslant \mu$ for every $x\in G\setminus Y$. A 1-thin subset is called {\it thin}.

\begin{lemma}
Let $\B = (X,P,B)$ be a ballean, $\mu$ be a cardinal. A subset $A\subseteq X$ is $\mu$-thin if and only if the set 
$$\{a\in A: |B(a,\alpha) \cap A| > \mu\}$$
is bounded.
\end{lemma}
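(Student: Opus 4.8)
The plan is to reformulate both sides of the equivalence in the same language and then observe that they differ only in whether one tests the ball-condition at all points of $X$ or only at points of $A$; the real work is showing these yield the same notion of boundedness. First I would record the trivial translation. For a fixed radius $\alpha$, put $X_\alpha = \{x \in X : |B(x,\alpha)\cap A| > \mu\}$ and $A_\alpha = X_\alpha \cap A$. By the definition of $\mu$-thin, $A$ is $\mu$-thin precisely when, for each $\alpha$, the set $X_\alpha$ is contained in a bounded subset, which (since every subset of a bounded set is bounded) is the same as saying $X_\alpha$ itself is bounded for every $\alpha$. The set appearing in the lemma is exactly $A_\alpha$, and $A_\alpha \subseteq X_\alpha$, so the forward implication is immediate: if each $X_\alpha$ is bounded, then so is each $A_\alpha$.

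The substance is the converse. I would assume $A_\alpha$ is bounded for every $\alpha$, fix a radius $\alpha$, and aim to show $X_\alpha$ is bounded. Take any $x \in X_\alpha$. Since $|B(x,\alpha)\cap A| > \mu \geq 0$, the intersection is nonempty, so I can choose $a \in B(x,\alpha)\cap A$. Using the normalization $B(x,\alpha) = B^*(x,\alpha)$ adopted earlier in the section, $a \in B(x,\alpha)$ gives $x \in B(a,\alpha)$. Next I invoke the second ballean axiom to produce $\gamma$ with $B(B(y,\alpha),\alpha) \subseteq B(y,\gamma)$ for all $y$; applying it at $y = a$ together with $x \in B(a,\alpha)$ yields $B(x,\alpha) \subseteq B(a,\gamma)$. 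Hence $|B(a,\gamma)\cap A| \geq |B(x,\alpha)\cap A| > \mu$, so $a \in A_\gamma$. This shows every point of $X_\alpha$ lies in $B(a,\alpha)$ for some $a \in A_\gamma$, i.e. $X_\alpha \subseteq B(A_\gamma,\alpha)$.

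It then remains to check that $B(A_\gamma,\alpha)$ is bounded. By hypothesis $A_\gamma$ is bounded, say $A_\gamma \subseteq B(z,\beta)$, and the composition axiom again furnishes $\delta$ with $B(A_\gamma,\alpha) \subseteq B(B(z,\beta),\alpha) \subseteq B(z,\delta)$; thus $X_\alpha$ is bounded, completing the converse. The only delicate point — and the reason the statement is not a tautology — is that the definition of $\mu$-thin quantifies over all $x \in X$, whereas the lemma inspects only $a \in A$. The symmetry $B = B^*$ together with the composition axiom is precisely what lets me pull an arbitrary bad point $x$ back to a nearby bad point $a$ of $A$, at the cost of enlarging the radius from $\alpha$ to $\gamma$; I expect this pull-back step to be the crux, since everything else is the standard coarseness bookkeeping.
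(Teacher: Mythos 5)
Your proof is correct and takes essentially the same approach as the paper: both use the symmetry $B = B^*$ together with the composition axiom to pull an arbitrary bad point $x$ with $|B(x,\alpha)\cap A| > \mu$ back to a bad point $a \in A$ at an enlarged radius, and then bound the resulting set $B(A_\gamma,\alpha)$ using the composition axiom again. The only differences are cosmetic — you argue directly via the inclusion $X_\alpha \subseteq B(A_\gamma,\alpha)$ where the paper argues by contradiction, and your cleaner bookkeeping incidentally corrects a slip in the paper's write-up, where the bounded set $Y$ is stated with radius $\alpha$ but is used as if defined with the enlarged radius $\beta$.
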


\begin{proof}
The "if" part is evident. To verify the "only if", we take an arbitrary $\alpha\in P$ and choose $\beta \in P$ such that $B(B(x,\alpha),\alpha)\subseteq B(x,\beta)$ for each $x\in X$. By the assumption, the set $Y = \{a\in A: |B(a,\alpha) \cap A| > \mu\}$ is bounded. We put $Z = B(Y,\alpha)$ and take an arbitrary $x\in X\setminus Z$. If $|B(x,\alpha)\cap A| > \mu$ and $a \in B(x,\alpha)\cap A$ then $|B(a,\beta)\cap A| > \mu$ because $B(x,\alpha)\subseteq B(a,\beta)$. Hence, $a\in Y$ and $x\in Z$. This contradiction shows that $|B(x,\alpha)\cap A| \leqslant \mu$ and $A$ is $\mu$-thin.
\end{proof}

\begin{theorem}
Let $\B = (X,P,B)$ be a ballean, $\mu$ be a cardinal, $A\subseteq X$. Then the following statements hold

\begin{itemize}
\item[(i)] if $A$ is a union of $\mu$ thin subsets and a union of $\mu$ bounded subsets of $X$ is bounded then $A$ is $\mu$-thin;

\item[(ii)] if $\B$ is ordinal and $A$ is $\mu$-thin, $\mu\in\mathbb{N}$ then $A$ can be partitioned into $\leqslant \mu$ thin subsets.
\end{itemize}
\end{theorem}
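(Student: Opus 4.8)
The plan is to dispatch part (i) by a direct counting estimate and to put all the work into part (ii), where ordinality and the finiteness of $\mu$ are both essential.

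For (i), I would fix a radius $\alpha$. Writing $A=\bigcup_{i<\mu}A_i$ with each $A_i$ thin, thinness furnishes for every $i$ a bounded set $Y_i$ with $|B(x,\alpha)\cap A_i|\leqslant 1$ whenever $x\notin Y_i$. By the standing hypothesis a union of $\mu$ bounded sets is bounded, so $Y=\bigcup_{i<\mu}Y_i$ is bounded, and for $x\notin Y$ one gets $|B(x,\alpha)\cap A|\leqslant\sum_{i<\mu}|B(x,\alpha)\cap A_i|\leqslant\mu$. Thus $A$ is $\mu$-thin, which is all of (i).

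For (ii) I would first discard the trivial case in which $A$ is bounded (a bounded set is thin), so assume $A$ is unbounded, hence infinite. Using ordinality, fix a cofinal, increasing, well-ordered family of radii $\{\gamma_\xi:\xi<\lambda\}$; by the reduction $B=B^*$ the balls are symmetric and $B(x,\gamma_\xi)\subseteq B(x,\gamma_\eta)$ for $\xi\leqslant\eta$. By Lemma 1 each set $E_\xi=\{a\in A:|B(a,\gamma_\xi)\cap A|>\mu\}$ is bounded, and clearly $E_\xi\subseteq E_\eta$ for $\xi\leqslant\eta$. Since $A$ is infinite and $\B$ is connected, every ball $B(a,\gamma_\xi)$ eventually captures more than $\mu$ points of $A$, so the threshold $\xi_0(a)=\min\{\xi:a\in E_\xi\}$ is defined for each $a\in A$. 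Put $N(a)=\{b\in A:b\in B(a,\gamma_\xi)\ \mbox{for some}\ \xi<\xi_0(a)\}$. As a nested union of the finite sets $B(a,\gamma_\xi)\cap A$ ($\xi<\xi_0(a)$), each of size at most $\mu$, with $\mu$ finite, the set $N(a)$ satisfies $a\in N(a)$ and $|N(a)|\leqslant\mu$.

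The heart of the argument is to $\mu$-colour the graph $\Delta$ on the vertex set $A$ in which $a$ and $b$ are adjacent iff $a\neq b$ and ($b\in N(a)$ or $a\in N(b)$). Granting a proper $\mu$-colouring $c$, each colour class $C=c^{-1}(i)$ is thin: for a radius $\gamma_\xi$, if $a\in C$ and some $b\in C\setminus\{a\}$ lies in $B(a,\gamma_\xi)$, then either $\xi<\xi_0(a)$, forcing $b\in N(a)$ and hence a $\Delta$-edge $ab$ with $c(a)=c(b)$, which properness forbids, or $\xi\geqslant\xi_0(a)$, i.e.\ $a\in E_\xi$. So $\{a\in C:B(a,\gamma_\xi)\cap C\neq\{a\}\}\subseteq E_\xi$ is bounded, and Lemma 1 gives that $C$ is thin; as there are at most $\mu$ classes we obtain the desired partition.

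The main obstacle is to get down to exactly $\mu$ colours (a naive degree bound only yields about $2\mu$). The decisive point is that $\Delta$ is $(\mu-1)$-degenerate. Indeed, in any finite $S\subseteq A$ choose $a\in S$ with $\xi_0(a)$ maximal; for every neighbour $b\in S$ of $a$ we have $\xi_0(b)\leqslant\xi_0(a)$, so whichever of $b\in N(a)$ or $a\in N(b)$ holds, symmetry of balls together with $\xi_0(b)\leqslant\xi_0(a)$ forces $b\in N(a)$, whence $a$ has at most $|N(a)|-1\leqslant\mu-1$ neighbours in $S$. Thus every finite subgraph of $\Delta$ is $(\mu-1)$-degenerate and so $\mu$-colourable, and since $\mu$ is finite the de Bruijn--Erd\H{o}s theorem upgrades this to a proper $\mu$-colouring of all of $\Delta$. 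This is precisely where the hypotheses $\mu\in\mathbb{N}$ (finite degeneracy and finite palette for de Bruijn--Erd\H{o}s, plus stabilization of $N(a)$) and ordinality (the well-ordered scale providing the thresholds $\xi_0$) enter.
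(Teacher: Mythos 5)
Your proof is correct, but on part (ii) it takes a genuinely different route from the paper. Part (i) you prove exactly as the paper does (unite the $\mu$ bounded exceptional sets and count). For part (ii), however, the paper gives no argument of its own: it simply invokes Lemma 1 together with Theorem 1.2 of \cite{b3}, where the partition result for ordinal balleans was established, whereas your proposal reconstructs a complete, self-contained proof. Your key devices --- the threshold $\xi_0(a)$ along the well-ordered cofinal scale of radii, the sets $N(a)$ of size $\leqslant\mu$, and above all the observation that taking a vertex of maximal $\xi_0$ shows the auxiliary graph $\Delta$ to be $(\mu-1)$-degenerate (so the naive degree bound of roughly $2\mu$ is avoided), capped by de Bruijn--Erd\H{o}s --- are in the same spirit as the chromatic-number argument the paper uses for the uniform-space counterpart at the end of Section 1, but transplanted to the asymptotic setting, where no single entourage controls all balls and the well-ordering of radii must substitute for it; what your version buys is a proof readable without the external reference, at the modest cost of the axiom of choice through de Bruijn--Erd\H{o}s. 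Two cosmetic repairs: your claim $a\in N(a)$ fails when $\xi_0(a)$ is the least index (then $N(a)=\emptyset$), but the estimate you actually use, $|N(a)\setminus\{a\}|\leqslant\mu-1$, survives in both cases; and thinness of a colour class must be verified for every radius in $P$, not only along the cofinal scale --- immediate by monotonicity of balls, but worth a sentence. Incidentally, since every $\Delta$-neighbour $b$ of $a$ with $\xi_0(b)\leqslant\xi_0(a)$ already lies in $N(a)$, a transfinite greedy colouring along any well-order of $A$ refining $\xi_0$ yields the proper $\mu$-colouring directly, so you could dispense with de Bruijn--Erd\H{o}s altogether.
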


\begin{proof}
(i) Let $A=\bigcup_{\lambda \leqslant \mu} A_\lambda$ and each $A_\lambda$ is thin, $\alpha \in P$. For each $\lambda \leqslant \mu$, we pick a bounded subset $Y_\lambda$ such that $|B(x,\alpha)\cap A|\leqslant 1$ for each $x\in X\setminus Y_\lambda$. We put $Y=\bigcup_{\lambda \leqslant \mu} Y_\lambda$. By the assumption, $Y$ is bounded. Clearly, $|B(x,\alpha)\cap A| \leqslant \mu$ for each $x\in X\setminus Y$ so $A$ is $\mu$-thin.

(ii) Apply Lemma 1 and \cite[Theorem 1.2]{b3}.
\end{proof}

\begin{theorem}
Let $G$ be a group, $\kappa$ be an infinite regular cardinal, $|G| = \kappa$. Then the following statements hold

\begin{itemize}
\item[(i)] each $(\kappa, m)$-thin subset $A$ of $G$, $m\in \mathbb{N}$ is a union of $\leqslant m$ $(\kappa,1)$-thin subsets;

\item[(ii)] the ideal generated by the family of $(\kappa,1)$-thin subsets coincides with the family of all $(\kappa, m)$-thin subsets, $m\in \mathbb{N}$.
\end{itemize}
\end{theorem}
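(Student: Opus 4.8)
The plan is to reduce Theorem 3 to the ballean statement of Theorem 1(ii) by exhibiting the right ballean on $G$, and then to deduce part (ii) as a formal consequence of part (i). First I would observe that for a group $G$ with $|G|=\kappa$ and $\kappa$ regular, the natural ballean to use is the left group ballean $\B_\kappa = (G, [G]^{<\kappa}, B)$ in which $B(x,F) = Fx \cup \{x\}$ for $F \in [G]^{<\kappa}$. I would check that this is indeed a connected ballean: connectedness holds because any $g,h \in G$ satisfy $h \in B(g, \{hg^{-1}\})$, the first ballean axiom follows since $B^*(x,F) = F^{-1}x \cup \{x\}$ so passing between $B$ and $B^*$ just replaces $F$ by $F \cup F^{-1}$, and the second axiom follows because $B(B(x,F),E) \subseteq B(x, EF \cup E \cup F)$. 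The bounded subsets of $\B_\kappa$ are precisely the sets in $[G]^{<\kappa}$, using regularity of $\kappa$: a union of $<\kappa$ sets each of size $<\kappa$ has size $<\kappa$. With this ballean in hand, unwinding the definition shows that a subset $A \subseteq G$ is $(\kappa,\mu)$-thin in the group sense exactly when it is $\mu$-thin in the ballean sense of $\B_\kappa$.

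Granting this translation, part (i) is immediate from Theorem 1(ii): I need only verify that $\B_\kappa$ is ordinal. This is where regularity and $|G|=\kappa$ are used in an essential way. Since $|[G]^{<\kappa}|$ can be controlled, I would enumerate $G = \{g_\xi : \xi < \kappa\}$ and take the cofinal family $P' = \{F_\xi : \xi < \kappa\}$ where $F_\xi = \{g_\eta : \eta < \xi\}$; this is an increasing $\kappa$-chain well-ordered by $\leqslant$, and it is cofinal because any $F \in [G]^{<\kappa}$ has size $<\kappa$ and hence, by regularity of $\kappa$, is contained in some initial segment $F_\xi$. Thus $\B_\kappa$ is ordinal, so Theorem 1(ii) applies and yields a partition of any $(\kappa,m)$-thin $A$ into $\leqslant m$ thin (i.e. $(\kappa,1)$-thin) subsets, which is slightly stronger than the union-of-$\le m$ claim actually asserted in (i).

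For part (ii), I would argue that the family of $(\kappa,m)$-thin subsets, $m$ ranging over $\mathbb{N}$, is exactly the ideal generated by the $(\kappa,1)$-thin subsets. One inclusion is Theorem 1(i): a union of $m$ thin subsets is $m$-thin, since in $\B_\kappa$ a union of $m < \kappa$ bounded sets is bounded (again by regularity), so every member of the generated ideal is $(\kappa,m)$-thin for some $m$. The reverse inclusion is precisely part (i): every $(\kappa,m)$-thin set decomposes as a union of $\leqslant m$ $(\kappa,1)$-thin sets and therefore lies in the ideal. Combining the two inclusions gives the claimed coincidence, and closure of the family under subsets and finite unions (needed to confirm it really is the ideal) is routine from the definition of $(\kappa,m)$-thinness.

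The main obstacle I anticipate is not the logical structure, which is short, but getting the ballean bookkeeping exactly right — specifically confirming that bounded sets in $\B_\kappa$ coincide with $[G]^{<\kappa}$ and that the chain $\{F_\xi\}$ is genuinely cofinal, both of which silently consume the regularity hypothesis. If $\kappa$ were singular, a set of size $<\kappa$ need not sit inside a single initial segment of length $<\kappa$, the ballean would fail to be ordinal, and Theorem 1(ii) would not apply; this is exactly the phenomenon the paper later exploits at $\aleph_\omega$ in Example 3. I would therefore make the regularity-dependent steps explicit rather than treat them as routine.
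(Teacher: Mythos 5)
Your proposal is correct and follows essentially the same route as the paper: both construct the ballean $\mathcal{B}(G,\kappa)=(G,[G]^{<\kappa},B)$ with $B(x,F)=Fx\cup\{x\}$, enumerate $G=\{g_\alpha:\alpha<\kappa\}$ so that the initial segments $F_\alpha$ form (by regularity) a cofinal well-ordered chain making the ballean ordinal, translate $(\kappa,m)$-thin into ballean $m$-thinness, apply Theorem 1(ii), and obtain (ii) from (i). Your write-up merely makes explicit the bookkeeping the paper leaves implicit (the ballean axioms, the identification of bounded sets with $[G]^{<\kappa}$, and the two inclusions in (ii) via Theorem 1(i)).
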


\begin{proof}
Clearly, (ii) follows from (i). To prove (i), we consider a ballean $\B(G,\kappa) = (G, [G]^{<\kappa}, B)$ where $B(x,F) = Fx\cup \{x\}$ for all $x\in G, F\in [G]^{<\kappa}$. We enumerate $G=\{g_\alpha: \alpha<\kappa\}$ and put $F_\alpha = \{g_\beta: \beta < \alpha\}$. Since $\kappa$ is regular, $\{F_\alpha: \alpha <\kappa \}$ is cofinal in $[G]^{<\kappa}$ so $\B(G,\kappa)$ is ordinal. To apply Theorem 1(i), it suffices to note that $A$ is $(\kappa, m)$-thin if and only if $A$ is $m$-thin in the ballean $\B(G,\kappa)$.
\end{proof}

In view of \cite[Chapter 1]{b8}, the balleans can be considered as asymptotic counterparts of the uniform spaces. For uniform spaces see \cite[Chapter 8]{b10}. Now we describe the uniform counterparts of thin subsets.

Let $\UU$ be a uniformity on a set $X$. For an entourage $U\in \UU$ and $x\in X$, we put $U(x) =\{y\in X: (x,y)\in U\}$. Let $A$ be a subset of $X$, $\mu$ be a cardinal. We say that $A$ is {\it $(\UU, \mu)$-discrete } if there exists $U\in \UU$ such that $|U(x) \cap A| \leqslant \mu$ for each $x\in X$, a $(\UU,1)$-discrete subset is called {\it $\UU$-discrete}. We show that each $(\UU, \mu)$-discrete subset $A$ of $X$ can be partitioned into $\leqslant \mu$ $\UU$-discrete subsets.

We fix an entourage $U\in \UU$ such that $|U(x)\cap A|\leqslant \mu$ for each $x\in X$ and choose a symmetric entourage $V\in \UU$ such that $V^2 \subseteq U$. Then we consider a graph $\Gamma$ with the set of vertices $A$ and the set of edges $E$ defined by the rule: $(x,y)\in E$ if and only if there exists $z\in X$ such that $x,y\in V(z)$. Since $|U(x)\cap A|\leqslant \mu$ for each $x\in X$ and $V^2 \subseteq U$, each unit ball in $\Gamma$ is of cardinality $\leqslant \mu$. Hence, the chromatic number $\chi(\Gamma)$ does not exceed $\mu$. We take a partition $\mathcal{P}$ of $A$ such that $|\mathcal{P}| = \chi(\Gamma)$ and each $P\in \mathcal{P}$ has no incident vertices. If $x\in P$ then $V(x)\cap P = \{x\}$. It follows that $P$ is $\UU$-discrete.

\section{Partitions}

\begin{lemma}
Let $G$ be a group, $\kappa$ be an infinite cardinal, $\kappa \leqslant |G|$, $m\in\mathbb{N}$. Let $A$ be a $(\kappa,m)$-thin subset of $G$, $S\subseteq G$, $|S|\geqslant \kappa$. Then there exists a subgroup $H$ of $G$ such that $S\subseteq H$, $|H| =|S|$ and $|Hx\cap A|\leqslant m$ for each $x\in G\setminus H$. In particular, $A$ is $(\kappa',m)$-thin for each $\kappa' \geqslant \kappa$, $\kappa' \leqslant |G|$.
\end{lemma}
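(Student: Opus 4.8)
The plan is to construct $H$ as the union of an increasing chain of subgroups $H_0\subseteq H_1\subseteq\cdots$ of length $\omega$, each of cardinality $\lambda = |S|$, arranged so that the chain ``absorbs'' every potential witness to a violation of the coset condition. I would start with $H_0 = \langle S\rangle$, the subgroup generated by $S$; since $\lambda$ is infinite and $\lambda\geqslant\kappa\geqslant\aleph_0$, we have $|H_0| = \lambda$.

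The engine of the construction is the following observation, which converts the local $(\kappa,m)$-thinness data into control over whole cosets. Suppose $x\in G$ and $|Hx\cap A| > m$. Then one can pick $m+1$ elements $h_0,\dots,h_m\in H$ with $h_0 x,\dots,h_m x$ distinct points of $A$, so that $F = \{h_0,\dots,h_m\}$ is a finite subset of $H$ with $|Fx\cap A| > m$. By $(\kappa,m)$-thinness there is a set $K_F\in[G]^{<\kappa}$ such that $|Fg\cap A|\leqslant m$ for all $g\in G\setminus K_F$; hence $x\in K_F$. Thus every ``bad'' coset representative is trapped inside the exceptional set $K_F$ of some finite $F$ drawn from $H$.

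Given $H_n$, I would therefore, for each $F\in[H_n]^{<\omega}$, fix such a witnessing set $K_F$, put $D_n = \bigcup\{K_F : F\in[H_n]^{<\omega}\}$, and set $H_{n+1} = \langle H_n\cup D_n\rangle$. The cardinal bookkeeping is routine: $|[H_n]^{<\omega}| = |H_n| = \lambda$ and each $|K_F| < \kappa\leqslant\lambda$, so $|D_n|\leqslant\lambda\cdot\kappa = \lambda$ and $|H_{n+1}| = \lambda$. Setting $H = \bigcup_{n<\omega} H_n$ yields a subgroup with $S\subseteq H$ and $|H| = \lambda$. To verify the coset condition for $x\in G\setminus H$, suppose $|Hx\cap A| > m$; by the observation there is a finite $F\subseteq H$ with $x\in K_F$, and since $F$ is finite all of its elements lie in some $H_n$, whence $K_F\subseteq D_n\subseteq H_{n+1}\subseteq H$ and so $x\in H$, contradicting $x\notin H$.

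Finally, for the ``in particular'' clause, I would assume $\kappa<\kappa'\leqslant|G|$ (the case $\kappa'=\kappa$ being the hypothesis). Given $F'\in[G]^{<\kappa'}$, apply the main part to $S = F'$, enlarging $F'$ to a set of cardinality $\kappa$ if it is smaller; this gives $H\supseteq F'$ with $|H| = \max(|F'|,\kappa) < \kappa'$ and $|Hx\cap A|\leqslant m$ for $x\in G\setminus H$. Then $K = H$ witnesses $(\kappa',m)$-thinness, since $F'g\subseteq Hg$ gives $|F'g\cap A|\leqslant m$ for all $g\in G\setminus H$. The main obstacle is the central observation of the second paragraph; once it is isolated, the chain construction and the cardinal arithmetic are mechanical. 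One should also take care that a chain of length $\omega$ suffices, which it does precisely because any violation is witnessed by the finitely many elements $h_0,\dots,h_m$, all of which already appear at a single finite stage.
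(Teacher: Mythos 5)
Your proposal is correct and follows essentially the same route as the paper: an $\omega$-chain of subgroups $H_0\subseteq H_1\subseteq\cdots$ that absorbs the exceptional sets $K_F$ for finite subsets of each stage, with the union $H$ working because any violation $|Hx\cap A|>m$ is witnessed by $m+1$ elements already contained in a single $H_n$. The only cosmetic differences are that the paper indexes the exceptional sets by $(m+1)$-element subsets $[H_n]^{m+1}$ rather than all of $[H_n]^{<\omega}$, and handles the case $|S|<\kappa$ in the ``in particular'' clause by appealing directly to $(\kappa,m)$-thinness instead of enlarging $S$.
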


\begin{proof}
We may suppose that $S$ is a subgroup. Let $H_0 = S$, $[H_0]^{m+1} = \{X\subset H_0: |X| = m+1\}$, $|S| = \kappa'$. For each $X\in [H_0]^{m+1} $, we choose $K_0(X) \in [G]^{<\kappa} $ such that $|Xg\cap A| \leqslant m$ for every $g\in G\setminus K_0(X)$. We put $K_0 = \cup\{K_0(X): X\in [H_0]^{m+1}\}$ and note that $|K_0|\leqslant \kappa'$ and $|H_0x\cap A|\leqslant m$ for each $x\in G\setminus K_0$.

We consider a subgroup $H_1$ of $G$ generated by $H_0\cup K_0$. Clearly, $|H_1| = \kappa'$. For each $X\in [H_1]^{m+1}$, we take $K_1(X) \in [G]^{<\kappa}$ such that $|Xg\cap A| \leqslant m$ for every $g\in G\setminus K_1(X)$. We put $K_1 = \cup\{K_1(X): X\in [H_1]^{m+1}\}$ and note that $|H_1x\cap A|\leqslant m$ for each $x\in G\setminus K_1$.

After $\omega$ steps we get an increasing sequence of $\{H_n: n\in \omega\}$ of subgroups of $G$ and a sequence $\{K_n:n\in \omega\}$ of subsets of $G$ such that $|H_n| = \kappa'$, $|K_n| \leqslant \kappa'$. Since $\cup_{n\in \omega} K_n \subseteq \cup_{n\in \omega} H_n$, for the subgroup $H = \cup_{n\in \omega} H_n$ we get a desired statement.

To show that $A$ is $(\kappa',m)$-thin, we take $S\in [G]^{<\kappa'}$. If $|S|<\kappa$ then there exists $K\in [G]^{<\kappa}$ such that $|Sx\cap A|\leqslant m$ for each $x\in G\setminus K$ because $A$ is $(\kappa,m)$-thin. If $|S|\geqslant \kappa$, we apply the previous statement.
\end{proof}

For a cardinal $\kappa$ and $n\in\omega$, we use the following notations from \cite{b11}: $\kappa^{+0} = \kappa, \kappa^{+(n+1)}= (\kappa^{+n})^+$. In particular, $\aleph_0^{+n} = \aleph_n$ for each $n\in \omega$.

\begin{theorem}
Let $\kappa$ be an infinite regular cardinal, $m\in\mathbb{N}$, $n\in\omega$, $G$ be a group of cardinality $\kappa^{+n}$. Each $(\kappa, m)$-thin subset $A$ of $G$ can be partitioned into $\leqslant m^{n+1}$ $(\kappa,1)$-thin subsets.
\end{theorem}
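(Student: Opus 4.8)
The plan is to induct on $n$. The base case $n=0$ is Theorem 2(i): a $(\kappa,m)$-thin subset of a group of cardinality $\kappa$ is a union of $\leqslant m=m^{0+1}$ $(\kappa,1)$-thin subsets, and disjointifying keeps the pieces $(\kappa,1)$-thin. For the inductive step assume the statement for $n$ and let $|G|=\kappa^{+(n+1)}=(\kappa^{+n})^+$. Enumerating $G=\{g_\alpha:\alpha<\kappa^{+(n+1)}\}$ and applying Lemma 2 at successor stages to $S=H_\alpha\cup\{g_\alpha\}$ (taking unions at limits), I would build a continuous increasing chain of subgroups $\{H_\alpha:\alpha<\kappa^{+(n+1)}\}$ with $|H_\alpha|=\kappa^{+n}$, $G=\bigcup_\alpha H_\alpha$, and $|H_\alpha x\cap A|\leqslant m$ for every $x\in G\setminus H_\alpha$. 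Since $m$ is finite, an increasing union of sets of size $\leqslant m$ again has size $\leqslant m$, so the last inequality survives the limit stages automatically; and by continuity each $a\in A\setminus H_0$ enters at a successor, so it has a well-defined rank $r(a)$, the unique $\alpha$ with $a\in H_{\alpha+1}\setminus H_\alpha$.

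The first reduction lowers the multiplicity by a factor $m$. For $a\in A\setminus H_0$ the coset $H_{r(a)}a$ meets $A$ in at most $m$ points, all of the same rank $r(a)$ (if $h\in H_{r(a)}$ then $ha\notin H_{r(a)}$, as $a\notin H_{r(a)}$); so these cosets partition $A\setminus H_0$ into blocks of size $\leqslant m$. Colouring the points of each block injectively by $\{1,\dots,m\}$ gives a map $c\colon A\setminus H_0\to\{1,\dots,m\}$ that is injective on every block. The gain is a localization property: if $g\neq e$ then $g\in H_{r(g)+1}$, and for $a$ with $c(a)=c(ga)$ and $r(a)\geqslant r(g)+1$ we have $g\in H_{r(a)}$, so $a$ and $ga$ lie in the single coset $H_{r(a)}a$ and $c$ forces $a=ga$, a contradiction. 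Hence for each colour $s$,
$$\{a : c(a)=s,\ c(ga)=s\}\subseteq H_{r(g)+1},$$
a subgroup of cardinality $\kappa^{+n}$: all $(\kappa,1)$-closeness within a colour class is confined to a single member of the chain.

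Next I would run the inductive hypothesis inside the chain. For every $\alpha$ the set $A\cap H_{\alpha+1}$ is $(\kappa,m)$-thin in the group $H_{\alpha+1}$ of cardinality $\kappa^{+n}$ (given $F\in[H_{\alpha+1}]^{<\kappa}$, intersect a witnessing $K\in[G]^{<\kappa}$ with $H_{\alpha+1}$), so by induction it partitions into $\leqslant m^{n+1}$ subsets that are $(\kappa,1)$-thin in $H_{\alpha+1}$; and a subset of $H_{\alpha+1}$ which is $(\kappa,1)$-thin there is $(\kappa,1)$-thin in $G$, since any $g\notin H_{\alpha+1}$ sends $H_{\alpha+1}$ off itself. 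The aim is a single colouring $\chi\colon A\to\{1,\dots,m^{n+1}\}$ whose restriction to each $A\cap H_{\alpha+1}$ is such a partition; the product colouring $a\mapsto(c(a),\chi(a))$ then uses $m\cdot m^{n+1}=m^{(n+1)+1}$ colours, and each of its classes is $(\kappa,1)$-thin in $G$: by the displayed confinement the closeness set of a class lies in $H_{r(g)+1}$, where $\chi$ restricts to a $(\kappa,1)$-thin partition, so the set has cardinality $<\kappa$ by the regular-cardinal characterization.

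The main obstacle is exactly the coherence of $\chi$. The inductive hypothesis furnishes a valid partition of each $A\cap H_{\alpha+1}$ separately, but independent choices along the $\kappa^{+(n+1)}$ levels need not agree, and for a fixed $g$ closeness can recur in cofinally many layers, so a naive layerwise gluing destroys $(\kappa,1)$-thinness. I would therefore construct $\chi$ by transfinite recursion along the continuous chain, maintaining the invariant that its restriction to $H_{\alpha+1}$ is an $m^{n+1}$-colouring with $(\kappa,1)$-thin classes, taking unions at limits (tolerated because closeness is confined below) and \emph{extending} the colouring from $H_\alpha$ to $H_{\alpha+1}$ at successors without enlarging the palette. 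This extension step is the crux: since $c$ meets each $H_\alpha$-coset in at most one point, every closeness relation newly created at stage $\alpha$ is mediated by an element of rank exactly $\alpha$, so the new layer interacts with the already coloured part only through such elements, and I expect this confinement to leave enough room to recolour the new layer within the fixed set of $m^{n+1}$ colours, completing the induction.
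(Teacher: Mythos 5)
Your setup is sound and runs parallel to the paper's: the chain built from Lemma 2 (your successor-stage use with $S=H_\alpha\cup\{g_\alpha\}$ is fine, and your observation that the bound $|H_\alpha x\cap A|\leqslant m$ survives limits because an increasing union of sets of size $\leqslant m$ has size $\leqslant m$ is correct), the block colouring $c$ with its confinement property (a legitimate substitute for the paper's preliminary application of Theorem 2 at the regular successor cardinal $\kappa^{+(n+1)}$, which reduces to $A$ being $(\kappa^{+(n+1)},1)$-thin and gives the stronger confinement $gx\notin A$ outside $H_{r(g)+1}$), and the correct reduction of everything to producing one colouring $\chi$ of $A$ by $m^{n+1}$ colours that is good on each $H_{\alpha+1}$. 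But the proof stops exactly at its load-bearing step: the successor-stage extension is an expectation, not an argument, and it is genuinely stronger than your inductive hypothesis. The hypothesis gives \emph{some} good $m^{n+1}$-colouring of $A\cap H_{\alpha+1}$; you need one \emph{extending} the already frozen colouring of $A\cap H_\alpha$, and precolouring extension with the same palette is a strictly harder problem than colouring. Concretely, for a fixed $g\in H_{\alpha+1}\setminus H_\alpha$ the set of new--old closeness pairs $\{x\in A:\ r(x)=\alpha,\ gx\in A\cap H_\alpha\}$ can have cardinality $\kappa^{+n}$ (for $m\geqslant 2$ a $(\kappa,m)$-thin set can satisfy $|\{a\in A: ga\in A\}|=|A|$, e.g.\ $A=X\cup gX$ with $X$ thin); the colours of the old endpoints $gx$ are fixed, and you must keep monochromatic matches below $\kappa$ \emph{simultaneously} for all $\kappa^{+n}$ many such $g$, while each new point participates in pairs for many $g$'s at once. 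Nothing in the proposal bounds this, and the Bergman--Davies material in Sections 3--4 shows that precisely this kind of ``same palette one cardinal higher'' claim is where the subject breaks down --- which is why the theorem's bound grows like $m^{n+1}$ in the first place. So your last paragraph is a genuine gap, not a routine verification.

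The paper's proof is designed so that no coherence between levels is ever needed, and this is the idea your attempt is missing. After the Theorem 2 reduction, each $A_\alpha=A\cap H_\alpha$ is partitioned \emph{independently} by the inductive hypothesis as $A_\alpha=A_\alpha(1)\cup\ldots\cup A_\alpha(m^{n+1})$, and the final pieces are the consecutive-difference sets
$$B_i=\bigcup_{\alpha<\kappa^{+(n+1)}}\bigl(A_{\alpha+1}(i)\setminus A_\alpha(i)\bigr).$$
For a fixed $g$ with $g\in H_{\alpha+1}\setminus H_\alpha$, the $1$-thin coset property $|H_{\alpha+1}x\cap A|\leqslant 1$ (for $x\notin H_{\alpha+1}$) confines all closeness within $B_i$ to $H_{\alpha+1}$, where membership in $B_i$ is governed only by the partitions at levels $\alpha$ and $\alpha+1$; the $(\kappa,1)$-thinness of $A_{\alpha+1}(i)$ plus the coset separation then finishes the count. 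Thus uncoordinated level-by-level choices suffice, and the extension lemma your recursion presupposes is eliminated rather than proved. If you want to keep your architecture (the $c$-colouring in place of the Theorem 2 reduction), you should likewise replace the global coherent $\chi$ by this difference construction; as written, the proposal is incomplete at its acknowledged crux.
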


\begin{proof}
We use an induction by $n$. For $n=0$, this is Theorem 2. Let $A$ be a $(\kappa,m)$-thin subset of $G$ and $|G|=\kappa^{+(n+1)}$. By Lemma 2, $A$ is $(\kappa^{+(n+1)}, m)$-thin. Applying Theorem 2, we can partition $A$ in $\leqslant m$ $(\kappa^{+(n+1)}, 1)$-thin subsets. We suppose that $A$ itself is $(\kappa^{+(n+1)}, 1)$-thin and show that $A$ can be partitioned in $\leqslant m^{n+1}$ $(\kappa,1)$-thin subsets.

Since $A$ is $(\kappa^{+(n+1)}, 1)$-thin, we use Lemma 2 to write $G$ as a union of increasing chain of subgroups $\{H_\alpha: \alpha < \kappa^{+(n+1)}\}$ such that $H_0 =\{e\}$, $e$ is the identity of $G$, $|H_\alpha| = \kappa^{+n}$ for each $\alpha >0$, $H_\beta = \cup_{\alpha < \beta} H_\alpha$ for each limit ordinal $\beta < \kappa^{+(n+1)}$. Clearly, $G\setminus \{e\} = \cup_{\alpha < \kappa^{+(n+1)}} H_{\alpha+1}\setminus H_\alpha$.

For each $\alpha < \kappa^{+(n+1)}$, $\alpha >0$, we put $A_\alpha = A\cap H_\alpha$. Since $A_\alpha$ is $(\kappa,m)$-thin and $|H_\alpha| = \kappa^{+n}$, by the inductive assumption, each $A_\alpha$ can be partitioned in $k_\alpha \leqslant m^{n+1}$ $(\kappa,1)$-thin subsets of $H_\alpha$. Admitting empty sets of the partition, we suppose that $k_\alpha = m^{n+1}$ for each $\alpha < \kappa^{+(n+1)}$ and write
$$A_\alpha = A_\alpha(1) \cup \ldots \cup A_\alpha(m^{n+1}),$$
where each $A_\alpha(i)$ is $(\kappa,1)$-thin.

For all $\alpha < \kappa^{+(n+1)}$ and $i\in \{1,\ldots , m^{n+1} \}$, we put 

$$B_\alpha(i) = A_{\alpha+1}(i) \setminus A_\alpha(i), \quad B_i = \bigcup_{\alpha < \kappa^{+(n+1)}}B_\alpha(i).$$

Since $A\setminus \{e\} = \cup \{B_i: i\in \{1,\ldots , m^{n+1} \} \}$, it suffices to verify that each subset $B_i$ is $(\kappa,1)$-thin. It turns out, since $\kappa$ is regular, in view of Lemma 1, it suffices to show that, for each $g\in G$, $g\ne e$,

$$|\{x\in B_i: gx \in B_i\}|< \kappa.$$

We take the minimal $\alpha < \kappa^{+(n+1)}$ such that $g\in H_{\alpha+1} \setminus H_\alpha$. If $x\in B_i\setminus H_{\alpha +1}$ then $gx\notin B_i$ by the choice of $H_{\alpha +1}$. Since $A_{\alpha+1}(i)$ is $(\kappa,1)$-thin, $|\{x\in A_{\alpha+1}(i) \setminus A_\alpha(i) : gx\in A_{\alpha+1}(i) \setminus A_\alpha(i)\}|< \kappa$. If $x,y\in A_{\alpha+1}(i) \setminus A_\alpha(i)$ and $gx,gy\in A_\alpha(i)$ then $x^{-1}y \in A_\alpha(i)$ so, by the choice of $H_{\alpha }$, $x=y$. If $x,y\in A_\alpha(i)$ and $gx, gy \in A_{\alpha+1}(i) \setminus A_\alpha(i)$, replacing $g$ to $g^{-1}$, we get the previous case.

Theorem is proved.

\end{proof}

Given an infinite group $G$ and infinite cardinal $\kappa$, $\kappa \leqslant |G|$, we denote by $\mu(G,\kappa)$ the minimal cardinal $\mu$ such that $G$ can be partitioned in $\mu$ $\kappa$-thin subsets. For a cardinal $\gamma$, $cf\;\gamma$ is a cofinality of $\gamma$, $\gamma^+$ is the cardinal successor of $\gamma$. By \cite{b5},

$$
\mu(G,\kappa) = \left\{
\begin{array}{ll}
\gamma,& \mbox{if $|G|$ is non-limit cardinal and $|G| = \gamma^+$;}\\
|G|,& \mbox{if $|G|$ is a limit cardinal and either}\\
&\mbox{$\kappa < |G|$ or $|G|$ is regular} \\
cf\; |G|, &\mbox{if $|G|$ is singular, $\kappa = |G|$ and $cf\; |G|$}\\
&\mbox{is a limit cardinal} \\
\end{array} \right.
$$

If $|G|$ is singular, $\kappa = |G|$ and $cf\; |G|$ is a non-limit cardinal, $cf|G|= \gamma^+$ then $\mu(G,\kappa)\in \{\gamma, \gamma^+\}$.

Now let $\gamma$ be a cardinal, $\gamma \leqslant \kappa$. Then $G$ is $(\kappa, \gamma)$-thin if and only if $\kappa = \gamma^+$. Applying above formulae for $\mu(G,\kappa)$, we get the following statement. 

\begin{theorem}
Let $G$ be a group, $\gamma$ be an infinite cardinal, $\kappa = \gamma^+$, $|G|\geqslant \gamma$. Then $G$ can be partitioned in $\gamma$ $(\kappa,1)$-thin subsets if and only if $|G|=\kappa$.
\end{theorem}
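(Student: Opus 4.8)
The plan is to read the statement off from the displayed formula for $\mu(G,\kappa)$, the least number of $(\kappa,1)$-thin subsets into which $G$ can be partitioned. Indeed, $G$ can be partitioned into $\gamma$ $(\kappa,1)$-thin subsets precisely when $\mu(G,\kappa)\leqslant\gamma$, so the whole theorem reduces to checking that, under the standing hypothesis $\kappa=\gamma^+$, one has $\mu(G,\kappa)\leqslant\gamma$ if and only if $|G|=\kappa$. Since a $(\kappa,1)$-thin subset is only considered once $|G|\geqslant\kappa$, I may and do assume $|G|\geqslant\kappa=\gamma^+>\gamma$ throughout, and the task becomes a short case analysis on the cardinal $|G|$.

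For the ``if'' direction I would observe that $|G|=\kappa=\gamma^+$ is a successor, hence non-limit, cardinal; writing $|G|=\gamma^+$, the first clause of the formula yields $\mu(G,\kappa)=\gamma$, i.e. $G$ is partitioned into $\gamma$ $(\kappa,1)$-thin subsets. As a consistency check tying in with the remark preceding the theorem: in the ballean $\B(G,\kappa)$ a bounded set has cardinality $<\kappa=\gamma^+$, so a union of $\gamma$ bounded sets has cardinality $\leqslant\gamma\cdot\gamma=\gamma<\kappa$ and is again bounded; thus Theorem 1(i) shows that any union of $\gamma$ $(\kappa,1)$-thin subsets is $(\kappa,\gamma)$-thin, in agreement with $G$ itself being $(\kappa,\gamma)$-thin exactly when $\kappa=\gamma^+$.

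For the ``only if'' direction, suppose $|G|\neq\kappa$; since $|G|\geqslant\kappa$ this forces $|G|>\kappa>\gamma$. If $|G|$ is a limit cardinal then $\kappa<|G|$, so the second clause gives $\mu(G,\kappa)=|G|>\gamma$. If $|G|$ is a successor, say $|G|=\delta^+$, then $\delta^+>\gamma^+$ forces $\delta\geqslant\gamma^+=\kappa$, and the first clause gives $\mu(G,\kappa)=\delta\geqslant\kappa>\gamma$. In either case $\mu(G,\kappa)>\gamma$, so no partition into $\gamma$ $(\kappa,1)$-thin subsets exists, which is the required contrapositive. The step I expect to require the most care is the bookkeeping in the piecewise formula: one must verify that the borderline branches — those with $|G|$ a regular limit cardinal, or with $|G|$ singular and $\kappa=|G|$, where $\mu(G,\kappa)$ could drop to $cf\,|G|$ or become ambiguous between $\delta$ and $\delta^+$ — simply never occur here. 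This is immediate from the fact that $\kappa=\gamma^+$ is a successor cardinal and hence regular, so it can be neither a limit cardinal nor singular; consequently the equality $\kappa=|G|$ can hold only in the first clause, and the degenerate cases are ruled out.
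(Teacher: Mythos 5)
Your proof is correct and takes essentially the same route as the paper: the paper's entire proof of this theorem is the remark that it follows by ``applying the above formulae for $\mu(G,\kappa)$'', and your case analysis (first clause for $|G|=\gamma^+$; second clause for $|G|$ a limit cardinal $>\kappa$; first clause with $|G|=\delta^+$, $\delta\geqslant\kappa$, for larger successors; the singular/ambiguous branches ruled out since $\kappa=\gamma^+$ is regular) is exactly the verification the paper leaves implicit. Your explicit restriction to $|G|\geqslant\kappa$ also correctly matches the paper's standing convention $|G|\geqslant\kappa\geqslant\aleph_0$ under which $(\kappa,1)$-thinness and the formula for $\mu(G,\kappa)$ are defined.
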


\section{Colorings}

For a group $G$ and $g\in G$, we say that $\{G\}\times \{g\}$ is a {\it horizontal line} in $G\times G$, $\{g\}\times G$ is a {\it vertical line} in $G\times G$, $\{(x,gx): x\in G\}$ is a {\it diagonal} in $G\times G$.

The statement (i) in the following theorem was proved by G.~Bergman, (ii) by the first author.

\begin{theorem}
For a group $G$ with the identity $e$, the following statements hold

\begin{itemize}
\item[(i)] if $|G| \geqslant \aleph_2$ and $\chi: G\times G \to \{1,2,3\} $ then there is $g\in G$, $g\ne e$ such that either some horizontal line $G\times \{g\}$ has infinitely many points of color 1, or some vertical line $\{g\}\times G$ has infinitely many points of color 2, or some diagonal $\{(x,gx): x\in G\}$ has infinitely many points of color 3;

\item[(ii)]  if $|G| \leqslant \aleph_1$ then there is a coloring $\chi: G\times G \to \{1,2,3\} $ such that each horizontal line has only finite number of points of color 1, each vertical line has only finite number of points of color 2, each diagonal has only finite number of points of color 3.
\end{itemize}
\end{theorem}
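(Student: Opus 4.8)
The plan is to read both parts as the group-theoretic incarnation of the Sierpiński–Kuratowski description of $\aleph_1$. Assigning to a point $(a,b)$ the triple of coordinates $(a,\,b,\,ba^{-1})$, the horizontal, vertical and diagonal families become the three rulings of a $3$-net: two lines from different families meet in exactly one point, and fixing the $i$-th coordinate sweeps out one line of the $i$-th family. Thus (i) asserts that this net over a set of size $\ge\aleph_2$ admits no ``$3$-coloring with finite traces,'' while (ii) constructs one over a set of size $\le\aleph_1$; these are the impossibility and the constructive halves of the classical threshold at $\aleph_1$.

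For (ii) I would first dispose of the countable case, where an enumeration of $G$ in order type $\le\omega$ together with the rule ``color a point by the coordinate of largest index'' already gives finitely many points of the forbidden color on every line, since fixing a line bounds one of the three indices below a fixed natural number. For $|G|=\aleph_1$ I would fix a bijection $\xi\colon G\to\omega_1$ and a ladder system, i.e. injections $\phi_\nu\colon\nu\to\omega$ for $\nu<\omega_1$, and color a point by the junior direction whose index has the smaller $\phi_M$-value, where $M=\max(\xi(a),\xi(b),\xi(ba^{-1}))$. For the unconstrained cube this is Sierpiński's coloring and a one-line computation (the set $\{\nu<M:\phi_M(\nu)<N\}$ is finite) gives each color a finite trace on its own family. \textbf{The main obstacle is the relation $ba^{-1}$}: unlike in a free cube, fixing one line-index still lets two of the three coordinate-indices vary, and in the subcase where a varying coordinate carries the maximal index the naive estimate collapses. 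I would resolve this by running the construction along an increasing continuous chain of countable subgroups $\{H_\alpha:\alpha<\omega_1\}$, arranging $\xi$ so that $\xi(ba^{-1})$ never strictly exceeds $\max(\xi(a),\xi(b))$; the offending subcase then degenerates into ties that the ladder is used to break, after which the three finiteness conditions are checked stage by stage.

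For (i) I would argue by contradiction, assuming that for every $g\ne e$ the horizontal line meets color $1$ finitely, the vertical line meets color $2$ finitely, and the diagonal meets color $3$ finitely (the three lines indexed by $e$ are discarded, affecting nothing). The horizontal and vertical constraints say that $T_1(b)=\{a:\chi(a,b)=1\}$ and $T_2(a)=\{b:\chi(a,b)=2\}$ are finite, so $f(x)=T_1(x)\cup T_2(x)$ is a finite set mapping; since $|G|\ge\aleph_2$, the free set theorem for finite set mappings yields a free set $Y$ with $|Y|=\aleph_2$ and $e\notin Y$, so that for distinct $a,b\in Y$ neither color $1$ nor color $2$ is available and hence $\chi(a,b)=3$. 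Thus every off-diagonal point of $Y\times Y$ has color $3$.

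\textbf{The crux, and the step I expect to be hardest, is to extract a single bad diagonal from this} — equivalently, to rule out that $Y$ is thin. If some $d\ne e$ satisfies $|Y\cap d^{-1}Y|=\aleph_0$ we are finished, since then $\{(x,dx):x\in Y,\ dx\in Y\}$ is an infinite set of color-$3$ points on the diagonal $d$, contradicting the hypothesis. Producing such a $d$ is the genuinely two-dimensional phenomenon responsible for the jump from $\aleph_1$ to $\aleph_2$: I would try to apply the free set theorem not to $G$ itself but to a configuration engineered to contain an infinite set together with one of its translates, so that the forced color-$3$ points pile up on one diagonal; more robustly, I would invoke the two-dimensional Kuratowski free-set theorem for pair-mappings, which becomes available precisely at $\aleph_2$. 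This is where the full weight of $|G|\ge\aleph_2$ is spent, and matching it against the explicit $\aleph_1$-coloring of (ii) is what pins the threshold.
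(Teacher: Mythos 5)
Your part (i) contains a genuine gap, and it is exactly the step you flag as the crux. The reduction via the free-set theorem for finite-image set mappings (Lázár) is correct as far as it goes: with $T_1(b)$ and $T_2(a)$ finite you do obtain $Y$ with $|Y|=\aleph_2$ on which every off-diagonal point of $Y\times Y$ has color $3$. But note that this step already works for countable $Y$, so it cannot be where the strength of $\aleph_2$ enters — and the remaining step does not just lack a proof, it fails for the tool you propose. Nothing in the free-set construction prevents $Y$ from being \emph{thin}, i.e.\ $|\{x\in Y: dx\in Y\}|<\aleph_0$ for every $d\ne e$; the paper itself recalls that every group of cardinality $\aleph_2$ contains a thin subset of cardinality $\aleph_2$ (choose each element outside the subgroup generated by its predecessors), and a free set can perfectly well be of this kind, since freeness is the \emph{absence} of relations. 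For a thin $Y$, your color-$3$ points scatter finitely over each diagonal and no contradiction follows. A Kuratowski-type free-set theorem for pair mappings again only delivers absence of relations, whereas what you need is the \emph{presence} of one: an infinite $Z$ and a single $d\ne e$ with $Z\cup dZ\subseteq Y$. Free-set technology is structurally unable to produce that, so this route cannot be closed as proposed.

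The paper's proof of (i) avoids this by preserving, rather than destroying, the translational structure: fix subgroups $G_0\subset G_1$ with $|G_0|=\aleph_0$, $|G_1|=\aleph_1$. Each horizontal line over $G_1$ carries finitely many color-$1$ points, so $G\times G_1$ has at most $\aleph_1$ of them, and since there are at least $\aleph_2$ cosets $gG_1$, some $g\ne e$ gives $gG_1\times G_1$ free of color $1$; then the countably many vertical lines over $gG_0$ carry only countably many color-$2$ points, so among the $\aleph_1$ cosets $hG_0\subseteq G_1$ some $h$, $h\ne g$, $h\ne e$, gives $gG_0\times hG_0$ free of color $2$. The surviving color-$3$ set $\{(gx,hx): x\in G_0\}$ is infinite and lies on the \emph{single} diagonal indexed by $hg^{-1}\ne e$ — the coset parametrization supplies precisely the coherence your free set $Y$ lacks. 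As for (ii), your plan is in skeleton the paper's proof (a continuous chain of countable subgroups plus a stagewise countable coloring lemma; the paper's Step 1 is an explicit alternating enumeration where you invoke ladder functions), but one detail as literally stated is impossible: a bijection $\xi\colon G\to\omega_1$ with $\xi(ba^{-1})\leqslant\max(\xi(a),\xi(b))$ for all $a,b$ would force every initial segment of the enumeration, including the finite ones, to be a subgroup. The inequality is available only at the level of the chain $\{H_\alpha\}$ — if $x,y\in G_{\alpha+1}$ then $x^{-1}y\in G_{\alpha+1}$, and $x^{-1}y\notin G_\alpha$ when exactly one of $x,y$ lies in $G_\alpha$ — which is exactly how the paper exploits it, resolving the within-level interactions by the Step 1 lemma rather than by ladders.
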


\begin{proof}
(i) We suppose the contrary and fix a corresponding coloring $\chi: G\times G \to \{1,2,3\}$. Let $G_0, G_1$  be subgroups of $G$ such that $G_0\subset G_1$, $|G_0|=\aleph_0$, $|G_1|=\aleph_1$. Since the set $G\times G_1$ has at most $\aleph_1$ points of color 1, there is $g\in G$, $g\ne e$ such that $gG_1 \times G_1$ has no points of color 1. The set $gG_0 \times G_1$ has at most $\aleph_0$ points of color 2, so there is $h\in G_1$ , $h\ne g$, $h\ne e$ such that $gG_0\times hG_0$ has no points of color 2. Hence, the set $\{(gx, hx): x\in G_0\}$ consists of color 3 and is contained in the diagonal $\{(y, g^{-1}hy): y\in G\}$.

(ii) We proceed in three steps.

Step 1. Let $X$ be a countable set, $\{A_n:n\in \omega\}$, $\{B_n:n\in \omega\}$ be partitions of $X$ such that $A_n\cap B_m$ is finite for all $n,m\in\omega$. Then there is a coloring $\chi:X->\to\{1,2\}$ such that each subset $A_n$ has only finite number of elements of color 1 and each subset $B_n$ has only finite number of elements of color 2.

We color $A_0$ in 2, $B_0\setminus A_0$ in 1, $A_1\setminus B_0$ in 2, $B_1\setminus (A_0\cup A_1)$ in 1, $A_2\setminus (B_0\cup B_1)$ in 2, $B_2\setminus(A_0\cup A_1 \cup A_2)$ in 1, and so on.

Step 2. Let $H$ be a countable group, $K$ be a subgroup of $H$. Applying Step 1, we define a coloring $\chi:((H\times H)\setminus (K\times K))\to \{1,2,3\}$ such that 

\begin{itemize}
\item $\chi((H\setminus K)\times(H\setminus K))=\{1,2\}$ and each horizontal line in this set has only finite number of points of color 1, and each vertical line has only finite number of points of color 2.
\item $\chi(K\times(H\setminus K))=\{1,3\}$ and each horizontal line in this set has only finite number of points of color 1, and each diagonal has only finite number of points of color 3.
\item $\chi((H\setminus K)\times K)=\{2,3\}$ and each vertical line in this set has only finite number of points of color 2, and each diagonal has only finite number of points of color 3.
\end{itemize}

Step 3. To prove (ii), we may suppose that $|G| = \aleph_1$ so write $G$ as a union of an increasing chain $\{G_\alpha:\alpha < \omega_1\}$, $G_0=\{e\}$, $e$ is the identity of $G$, such that $G_\beta=\bigcup_{\alpha<\beta} G_\alpha$ for each limit ordinal $\beta<\omega_1$. We put $\chi_0(e)=1$ and, for each $\alpha < \omega_1$ use a coloring $\chi_\alpha((G_{\alpha+1}\times G_{\alpha+1})\setminus(G_\alpha\times G_\alpha))$ defined on Step 2. We put $\chi=\bigcup_{\alpha<\omega_1}\chi_\alpha$ and verify that $\chi: G\times G\to \{1,2,3\}$ is thin.

Clearly, $(G\times\{e\})\cap \chi^{-1}(1)\subseteq G_1\times\{e\}$, $(\{e\}\times G)\cap \chi^{-1}(2)\subseteq \{e\}\times G_1$. If $g\in G_{\alpha+1}\setminus G_\alpha$ then 
$$
(G\times\{g\})\cap \chi^{-1}(1)\subseteq G_{\alpha+1}\times\{g\},\quad (\{g\}\times G)\cap \chi^{-1}(2)\subseteq \{g\}\times G_{\alpha+1}.
$$
Thus, each horizontal line in $G\times G$ has only finite number of points of color 1, and each vertical line in $G\times G$ has only finite number of points of color 2.

At last, if $(x,y)\in (G_{\alpha+1}\setminus G_\alpha)\times G_\alpha$ or $(x,y)\in G_\alpha\times (G_{\alpha+1}\setminus G_\alpha)$ then $x^{-1}y\in G_{\alpha+1}\setminus G_\alpha$. It follows that each diagonal has only finite number of points of color 3.
\end{proof}

In \cite{b12}, \cite{b13} R.~Davies proved the following theorem (see also \cite[Theorem 1.7]{b11}). 

For every $n\in \mathbb{N}$, the following statements are equivalent

\begin{enumerate}
\item $2^{\aleph_0}\leqslant \aleph_n$;
\item There is a sequence $L_0,\ldots L_{n+1}$ of lines in the plane $\mathbb{R}^2$ and a coloring $\chi: \mathbb{R}^2 \to \{0,\ldots, n+1\}$ such that, for each $i\in \{0,\ldots, n+1\}$, every line in $\mathbb{R}^2$ parallel to $L_i$ intersects $\chi^{-1}(i)$ in finitely many points.
\end{enumerate}

We note that the group $\mathbb{R}$ of real numbers is the direct sum of $2^{\aleph_0}$ copies of the group $\mathbb{Q}$ of rational numbers and use a part of this theorem in the following form.

\begin{theorem}
Let $n\in \mathbb{N}$, $H=\oplus_{\aleph_n}\mathbb{Q}$, $a_0, \ldots ,a_n$, $b_0, \ldots ,b_n$ be rational numbers such that, for each $i$, either $a_i \ne 0$ or $b_i \ne 0$. Then, for every coloring $\chi: H\times H\to \{0,\ldots, n\}$, there exist $i\in \{0,\ldots, n\}$, $h\in H$, $h\ne 0$ and infinitely many pairs $a,b\in H$ such that $\chi(a,b)=i$ and $a_ia+b_ib = h$.
\end{theorem}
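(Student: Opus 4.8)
The plan is to prove the contrapositive: assuming that for every colour $i$ and every $h\in H$ the set $\{(a,b):\chi(a,b)=i,\ a_ia+b_ib=h\}$ is \emph{finite} (call such a $\chi$ \emph{good}), I will derive a contradiction. Since $H=\oplus_{\aleph_n}\mathbb{Q}$ is a $\mathbb{Q}$-vector space of dimension $\aleph_n$ and each set $\{a_ia+b_ib=h\}$ is a coset of the kernel of the $\mathbb{Q}$-linear map $(a,b)\mapsto a_ia+b_ib$, this is precisely the assertion that $H\times H$ carries a good $(n+1)$-colouring with respect to the $n+1$ rational directions $L_i(a,b)=a_ia+b_ib$. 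The statement is thus the $\oplus_{\aleph_n}\mathbb{Q}$-analogue of the nontrivial implication $(2)\Rightarrow(1)$ of Davies' theorem quoted above, the only change being that the governing $\mathbb{Q}$-dimension is $\aleph_n$ rather than $2^{\aleph_0}=\dim_{\mathbb{Q}}\mathbb{R}$. I would prove it by induction on $n$, that is, on the number $n+1$ of colours.

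For the base case $n=1$ one has $|H|=\aleph_1$ and two directions, which after merging (if proportional) or an invertible rational change of the two coordinates I may take to be $L_0(a,b)=a$ and $L_1(a,b)=b$; goodness then says that $Z(a):=\{b:\chi(a,b)=0\}$ is finite for each $a$ and that $\{a:\chi(a,b)=1\}$ is finite for each $b$. I would pick countably many distinct $a_0,a_1,\dots\in H$ and, since $\bigcup_n Z(a_n)$ is countable while $|H|=\aleph_1$, choose $b\in H\setminus(\{0\}\cup\bigcup_n Z(a_n))$. Then $\chi(a_n,b)=1$ for every $n$, so the line $L_1=b$ with $b\neq 0$ carries infinitely many points of colour $1$; this contradicts goodness and, read positively, supplies the required $i=1$, $h=b\neq 0$ and infinite family.

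For the inductive step I would first pass to a subspace of dimension exactly $\aleph_n$ (restricting a good colouring keeps it good, and an infinite monochromatic family inside a subspace is one in $H\times H$), and merge any proportional directions so that the $L_i$ are pairwise non-proportional. Writing $H=\bigcup_{\xi<\omega_n}H_\xi$ as a continuous increasing chain of subspaces with $\dim H_\xi=\aleph_{n-1}$, I assign to each nonzero $v$ its rank $\mathrm{rk}(v)=$ the least $\xi$ with $v\in H_{\xi+1}$, exactly as the chain of subgroups is used in the proof of Theorem 3. The aim is to isolate a single level $H_\xi\times H_\xi$, of dimension $\aleph_{n-1}$, on which $\chi$ induces a good $n$-colouring for $n$ of the directions; the inductive hypothesis applied to $H_\xi\cong\oplus_{\aleph_{n-1}}\mathbb{Q}$ then yields an infinite monochromatic line of nonzero height there, contradicting goodness. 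The collapse from $n+1$ to $n$ colours is to be driven by the finiteness of one chosen colour along its direction-lines, using that every direction-$i$ line meets the new part $(H_{\xi+1}\times H_{\xi+1})\setminus(H_\xi\times H_\xi)$ in a controlled fashion.

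The step I expect to be the main obstacle is exactly this collapse of one colour. In the three-colour case (Theorem 5(i)) Bergman could peel the two axis-parallel colours one at a time by finding a large coset-box that avoids each, because only a single diagonal direction survived for the final countable box. For $n+1$ colours, however, $n-1$ of the directions remain simultaneously diagonal after normalising two of them to the axes, and no sub-square $H'\times H'$ of dimension $\aleph_{n-1}$ can in general avoid a prescribed colour outright — that colour may meet every such square. Coping with all the diagonal directions at once is the entire content of Davies' theorem and the reason the threshold for $n+1$ colours sits precisely at dimension $\aleph_n$; it is here that the rank/continuity argument along the chain $\{H_\xi\}$ must do the real work and where I would concentrate the estimates. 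The cardinal $\aleph_n$ enters only as the length of this chain, which is exactly why the conclusion is insensitive to the size of the continuum.
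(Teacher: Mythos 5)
Your identification of the statement as the $\oplus_{\aleph_n}\mathbb{Q}$-analogue of the implication $(2)\Rightarrow(1)$ in Davies' theorem is exactly right, and your base case $n=1$ is correct. But note that the paper does not reprove Davies at all: its entire proof is to set $L_i=\{(x,y)\in H\times H: a_ix+b_iy=0\}$ and invoke the quoted Davies--Simms theorem, the remark that $\mathbb{R}\cong\oplus_{2^{\aleph_0}}\mathbb{Q}$ serving as the licence to transfer the argument from $\mathbb{R}^2$ to a $\mathbb{Q}$-vector space, with $2^{\aleph_0}$ replaced by the $\mathbb{Q}$-dimension $\aleph_n$. You instead set out to reprove the hard direction from scratch, which is a legitimate (more self-contained) route --- but then the inductive step must actually be carried out, and it is not: you explicitly leave the ``collapse of one colour'' open, and that collapse \emph{is} the theorem. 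As it stands the proposal proves only the two-colour case and has a genuine gap at its core.

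Moreover, the mechanism you sketch for the inductive step cannot work as stated. Restricting a good $(n+1)$-colouring to any level $H_\xi\times H_\xi$ with $\dim H_\xi=\aleph_{n-1}$ always yields a good $(n+1)$-colouring of that square, and good $(n+1)$-colourings of squares of dimension $\aleph_{n-1}$ genuinely exist --- that is the \emph{other} direction of Davies' equivalence, of which Theorem 5(ii) ($|G|\leqslant\aleph_1$, three colours) is an instance --- so no level of the chain can be forced to carry only $n$ of the colours, and the rank bookkeeping borrowed from the proof of Theorem 3 yields nothing here. The correct mechanism is the one in Bergman's proof of Theorem 5(i), iterated down a tower $K_0\subset K_1\subset\cdots\subset K_{n-1}$ of subspaces with $|K_j|=\aleph_j$: to peel colour $i$ at stage $j$, take a strip $S=\{(x,y): a_ix+b_iy\in v+K_j\}$ with $v\notin K_j$ (so every direction-$i$ line it contains has nonzero level); $S$ is covered by only $\aleph_j$ direction-$i$ lines, each carrying finitely many points of colour $i$, hence $S$ contains at most $\aleph_j$ points of colour $i$, while it contains $\aleph_{j+1}$ pairwise disjoint translated boxes $(u+K_j)\times(w+K_j)$ (using $a_iK_j+b_iK_j=K_j$), so some box avoids colour $i$ outright. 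Iterating kills colours $n,\ldots,1$ and leaves a countable box monochromatic in colour $0$, which every direction-$0$ line of suitable nonzero level meets in infinitely many points. Supplying these choices --- disjointness of the level cosets, nonzero levels at every stage, and the degenerate cases $a_i=0$ or $b_i=0$ --- is precisely what your proposal is missing; alternatively, you could simply reduce to the cited Davies--Simms theorem as the paper does.
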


\begin{proof}
For $i\in \{0,\ldots, n\}$, let $L_i = \{(x,y)\in H\times H : a_ix + b_iy = 0\}$. Apply Davies' theorem to the lines $L_0,\ldots L_{n}$.
\end{proof}

\section{Examples}

With minor changes, the first example is a reprint of the original Bergman's letter to the first author (15 May 2011).

\begin{example}
Let $H,K$ be groups of cardinality $\aleph_2$, $G = H\times K$. We construct a 2-thin subset $A$ of $G$ which cannot be partitioned into two thin subsets.

One can find a thin subset $X\subset K$ of cardinality $\aleph_2$. For instance, do a recursion over $\aleph_2$, selecting at each step an element not in the subgroup generated by those that precede. Since $X$ has cardinality $\aleph_2$, we can index it by pairs of elements of $H$: $X = \{x_{\{a,b\}}: a,b\in H\}$. After choosing such an indexing, we let 
$$A = \{x_{\{a,b\}}, ax_{\{a,b\}}, bx_{\{a,b\}}: a,b \in H\}, \quad A\subseteq H\times K = G.$$

I claim first that $A$ is 2-thin. For this it suffices to show that for every 3-element subset $F$ of $G$, only finitely many right translates of $F$ lie in $A$. In proving this, we may, by an initial right translation assume that $e\in F$, $e$ is the identity of $G$.

Assume that $F$ lay in $HK$ but not in $H$. Then every one of its right translates $Fg: g\in G$ has elements lying in more than one left coset of $H$; hence if such a right translate is contained in $A$, its elements do not all have the same second coordinate in $X$. Since $X$ is thin in $K$, we have $\{g\in G: Fg\subset A\}$ is finite.

We are left with the case $F\subset H$. In this case, it is not hard to see that $F$ has exactly 6 right translates contained in $A$; namely, these are obtained by taking the 6 arrangement of the elements of $F$ as an ordered 3-tuple, applying to each the right translate by a member of $H$ that puts it in the form $(e,a,b)$ and then right translating this by $x_{\{a,b\}}$ to get a 3-tuple of members of $A$.

Finally, let us show that $A$ cannot be partitioned into two thin subsets, $A_1$ and $A_2$. Suppose we had such a partition. Then let us color $H\times H$ as follows. Color an element $(a,b)\in H\times H$

\begin{itemize}
\item  with color 1 if $x_{\{a,b\}}$ and $ax_{\{a,b\}}$ lie in the same one of $A_1$ and $A_2$,  and  $bx_{\{a,b\}}$  lies in the other one.
\item  with color 2 if $x_{\{a,b\}}$ and $bx_{\{a,b\}}$ lie in the same one of $A_1$ and $A_2$,  and  $ax_{\{a,b\}}$  lies in the other one. 
\item  with color 1 if $ax_{\{a,b\}}$ and $bx_{\{a,b\}}$ lie in the same one of $A_1$ and $A_2$,  and  $x_{\{a,b\}}$  lies in the other one. 
\item  with any of these three colors if  $x_{\{a,b\}}$, $ax_{\{a,b\}}$, and $bx_{\{a,b\}}$ all lie in the same set $A_1$ or $A_2$.
\end{itemize}

Now if some horizontal line  $\{a\} x H$ in $H\times H$ had infinitely many points of color 1, then there would be infinitely many $b$ such that $x_{\{a,b\}}$ and $ax_{\{a,b\}}$ lay in the same one of $A_1$ and $A_2$. Hence one of the latter sets, say $A_i$, contains infinitely many 2-element sets $\{ x_{\{a,b\}}, ax_{\{a,b\}}\}$.  This gives infinitely many right translates of the pair $\{1, a\}$ in $A_i$, contradicting the assumption of thinness.

If some horizontal  line $H\times \{a\}$ or diagonal $\{(h,ah): h\in H\}$ had infinitely many points of color 2, respectively 3, we would get a contradiction in the same way. The case of horizontal line is like that of vertical line, so let us check the diagonal case. Suppose that for some $a$ infinitely many of the pairs $hx_{\{h,ah\}}$ and $ahx_{\{h,ah\}}$ lay in the same of our thin sets. Then at least one of those sets would contain infinitely many of these pairs; but this shows that the set would contain infinitely many right translates of the pair $\{e,a\}$, contradicting thinness.

The above arguments show that our coloring of $H\times H$ contradicts Theorem 5(i); so $A$ cannot, as assumed, be decomposed into two thin sets.
\end{example}

\begin{example}
For each natural number $m\geqslant 2$, we construct an Abelian group $G$ of cardinality $\aleph_n$, $ n = \frac{m(m+1)}{2} -1$, and find a 2-thin subset $A$ of $G$ which cannot be partitioned into $m$-thin subsets.

We put $H=\oplus_{\aleph_n}\mathbb{Q}$, take an arbitrary Abelian group $K$ of cardinality $\aleph_n$ and let $G=K\oplus H$. Then we choose a thin subset $X$ of $K$, $|X| = \aleph_2$ and enumerate $X$ by the pairs of elements of $H$: $X = \{x_{\{a,b\}}: a,b\in H\}$. After choosing such an indexing, we let 

$$A = \{x_{\{a,b\}} + ka + k^2b: a,b \in H , k \in \{0,\ldots,m\}  \}.$$

To see that $A$ is 2-thin, it suffices to show that, for any two distinct non-zero elements $x,y\in G$, the set

$$A(x,y)=\{a\in A: a+x\in A, a+y\in A\}$$
is finite. We write $x = x_1 + x_2$, $y = y_1 + y_2$, $x_1,x_2\in K$, $y_1,y_2\in H$. If either $x_1 = 0$ or $x_2 = 0$, $A(x,y)$ is finite because $X$ is thin. Let $x_1 = x_2 = 0$. If $x_{\{a,b\}} + ia + i^2b\in A(x,y)$ then

$$
\left\{
\begin{array}{lll}
x_2 + ia + i^2b &  = & ja+j^2b \\
y_2 + ia + i^2b &  = & ka+k^2b\\
\end{array} \right.
$$
for some distinct $i,k\in \{0,\ldots,m\} $. In this system of relations, $a,b$ are uniquely determined by $i,j,k$. Since we have only finite number of possibilities to choose $i,j,k$, $A(x,y)$ is finite.

Now assume that $A$ is partitioned $A=A_1\cup \ldots \cup A_m$. To show that at least one cell of the partition is not thin, we define a coloring $\chi: H\times H\to \{(k,l): 0\leqslant k < l \leqslant m\}$ by the following rule: for $a,b\in H$, we choose $k,l$ so that $x_{\{a,b\}} + ka + k^2b$, $x_{\{a,b\}} + la + l^2b$ lie in the same cell of the partition $A_1\cup \ldots \cup A_m$. We note that 

$$(x_{\{a,b\}} + ka + k^2b) - (x_{\{a,b\}} + la + l^2b) = (k-l)a + (k^2-l^2)b.$$

Since $\frac{m(m+1)}{2} = n+1$, by Theorem 6, there exist $h\in H, h\ne 0, k<l$, and infinitely many monochrome pairs $a,b$ such that 

$$(k-l)a + (k^2-l^2)b = h.$$

By the definition of $\chi$, there are a cell $A_i$ of the partition and infinitely many pairs $a,b$ such that 

$$x_{\{a,b\}} + ka + k^2b \in A_i, x_{\{a,b\}} + ka + k^2b + h\ in A_i,$$
so $A_i$ is not thin.
\end{example}

\begin{example}
We construct a group $G$ of cardinality $\aleph_\omega$ and point out a 2-thin subset $A$ of $G$ which cannot be partitioned into $m$ thin subsets for each $m\in\mathbb{N}$.

For each $m\geqslant 2$, we take a group $G_n$, $ n = \frac{m(m+1)}{2} -1$ from Example 2, put $N = \{ \frac{m(m+1)}{2} -1: m\geqslant 2 \}$, take a 2-thin subset $A_n$ of $G_n$ which cannot be partitioned into $m$-thin subsets, and denote

$$G = \oplus_{n\in N} G_n,\quad A = \cup_{n\in N}A_n.$$
We take any distinct $x,y\in G\setminus\{0\}$ and, in notation of Example 2, show that $A(x,y)$ is finite, so $A$ is 2-thin. If $x,y\in G_n$ for some $n$ then $A(x,y)$ is 2-thin because $A_2$ is 2-thin. If $x\notin G_n$ for each $n$ then $|A\cap (A+x)|\leqslant 1$ so $A(x,y)$ is also finite. By the choice of $\{A_n: n\in N\}$, $A$ cannot be finitely partitioned into thin subsets.
\end{example}

\section{Ultrafilter context}

Let $G$ be a discrete group, $\beta G$ be the Stone-\v{C}ech compactification of $G$.
We take the elements of $\beta G$ to be ultrafilters on $G$ identifying $G$ with the set of principal ultrafilters, so $G^* = \beta G\setminus G$ is the set of free ultrafilters. The topology of $\beta G$ can be defined by the family $\{\overline{A}: A\subseteq G \}$ as a base for open sets, $\overline{A} = \{p\in \beta G: A\in p\}$.

The multiplication on $G$ can be naturally extended to $\beta G$ (see \cite[Chapter 4]{b14}). By this extension, the product $pq$ of ultrafilters $p$ and $q$ can be defined as follows. Take an arbitrary $P\in p$ and, for each $g\in P$, pick $Q_g\in q$. Then $\cup_{g\in P} gQ_g \in pq$ and each member of $pq$ contains a subset of this form. In particular, if $g\in G$ and $q\in \beta G$ then $gq = \{gQ: Q\in q\}$.

The proofs of all propositions in this section can be easily extracted from corresponding definitions.

\begin{proposition} For each $m\in \mathbb{N}$, a subset $A$ of a group $G$ is $m$-thin if and only if $|Gp\cap \overline{A}|\leqslant m$ for each $p\in G^*$.
\end{proposition}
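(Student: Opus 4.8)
The plan is to translate the ultrafilter condition into the combinatorial definition of $m$-thinness via the single observation that, for $g\in G$ and $p\in \beta G$, one has $gp\in\overline{A}$ if and only if $g^{-1}A\in p$; this is immediate from the formula $gp=\{gQ:Q\in p\}$ recalled above. Consequently $Gp\cap\overline{A}=\{gp:g^{-1}A\in p\}$, and the inequality $|Gp\cap\overline{A}|\geqslant m+1$ means exactly that there are group elements $g_0,\dots,g_m$ with the ultrafilters $g_0p,\dots,g_mp$ pairwise distinct and $g_j^{-1}A\in p$ for each $j$. Note that distinctness of the ultrafilters $g_jp$ forces the elements $g_j$ themselves to be pairwise distinct.

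For the ``only if'' (easier) direction I would argue by contraposition inside a fixed $p\in G^*$. Assuming $|Gp\cap\overline{A}|\geqslant m+1$, pick witnesses $g_0,\dots,g_m$ as above and set $Q=\bigcap_{j=0}^m g_j^{-1}A$. Then $Q\in p$, so $Q$ is infinite, and for every $g\in Q$ the $m+1$ distinct elements $g_0g,\dots,g_mg$ all lie in $A$. Taking $F=\{g_0,\dots,g_m\}$ we get $|Fg\cap A|\geqslant m+1$ for all $g$ in the infinite set $Q$, which contradicts $m$-thinness (for this $F$ the set $\{g:|Fg\cap A|>m\}$ must be finite). Hence $m$-thinness implies the bound for every $p\in G^*$.

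For the ``if'' direction I would again use contraposition: assume $A$ is not $m$-thin and produce $p\in G^*$ with $|Gp\cap\overline{A}|\geqslant m+1$. Failure of $m$-thinness yields a finite $F$ for which $S=\{g:|Fg\cap A|\geqslant m+1\}$ is infinite; for each $g\in S$ choose an $(m+1)$-subset of $F$ whose right translate by $g$ lies in $A$, and since $F$ has only finitely many $(m+1)$-subsets, a single such subset $\{h_0,\dots,h_m\}$ works for all $g$ in some infinite $S'\subseteq S$. Thus $h_jg\in A$ for every $g\in S'$ and every $j$.

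The main obstacle is to guarantee that the $m+1$ ultrafilters $h_jp$ we obtain are genuinely distinct: a priori some $h_ip$ and $h_jp$ could coincide (equivalently $h_i^{-1}h_j$ could stabilise $p$), even though $h_i\neq h_j$. I would remove this difficulty by a disjointification step. The translates $h_iS'$ and $h_jS'$ overlap precisely when $S'$ meets one of its translates by the finitely many non-identity elements $h_i^{-1}h_j$ with $i\neq j$, so a routine recursive (greedy) thinning produces an infinite $S''\subseteq S'$ with $h_0S'',\dots,h_mS''$ pairwise disjoint. Choosing any free $p\in G^*$ with $S''\in p$, each $h_jS''$ belongs to $h_jp$ and is contained in $A$; the pairwise disjointness of these members forces the $h_jp$ to be pairwise distinct, while $h_jS''\subseteq A$ gives $h_jp\in\overline{A}$. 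This exhibits $m+1$ distinct points of $Gp\cap\overline{A}$ and completes the proof. I expect this disjointification, which handles the possible stabiliser coincidences, to be the only nonroutine point; the remaining steps are direct translations of the definitions.
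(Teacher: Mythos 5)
Your proof is correct; note, though, that the paper contains no argument to compare it with --- the authors merely remark that the proofs of all propositions in Section 5 ``can be easily extracted from corresponding definitions,'' so your write-up supplies exactly the details they omit. Both directions check out: the translation $gp\in\overline{A}$ iff $g^{-1}A\in p$ is right; in the ``only if'' direction the set $Q=\bigcap_{j}g_j^{-1}A$ lies in the free ultrafilter $p$ and is therefore infinite, giving the required contradiction with $m$-thinness for $F=\{g_0,\dots,g_m\}$; and in the ``if'' direction the pigeonhole over the finitely many $(m+1)$-subsets of $F$ and the greedy thinning both succeed, the latter because at each stage only the finitely many elements $ds_l$ and $d^{-1}s_l$ (for the finitely many $d=h_i^{-1}h_j\neq e$) must be avoided, while $s=ds$ is impossible for $d\neq e$. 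One simplification worth knowing: the coincidence $h_ip=h_jp$ that you identify as the main obstacle cannot in fact occur. For $g\neq e$, left translation by $g$ is a fixed-point-free map of the discrete space $G$, and by a standard fact about extensions of maps to the Stone--\v{C}ech compactification (see \cite{b14}) its extension to $\beta G$ satisfies $gp=p$ only if $\{x\in G: gx=x\}\in p$; since that set is empty, $gp\neq p$ for every $p\in\beta G$, so $h_ip\neq h_jp$ already follows from $h_i\neq h_j$, and any free $p$ containing $S'$ witnesses $|Gp\cap\overline{A}|\geqslant m+1$ with no disjointification needed. Your recursive thinning is a correct, self-contained substitute for invoking that fact, so the proof stands as written, just slightly longer than necessary.
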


By \cite{b1}, a subset $A$ of a group $G$ is {\it sparse} if for any infinite subset $X\subset G$ there exists a finite subset $F$ such that $\cap_{g\in F}gA$ is finite. An ultrafilter $p\in G^*$ has a sparse member if and only if $p\notin \overline{G^*G^*}$.

\begin{proposition} A subset $A$ of a group $G$ is sperse if and only if the set $Gp\cap \overline{A}$ is finite for each $p\in G^*$.
\end{proposition}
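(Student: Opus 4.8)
The plan is to convert the topological--algebraic condition into a combinatorial one and then compare it directly with the definition of sparseness, exactly in the spirit of Proposition 1. First I would unwind the left multiplication on $\beta G$: by the formula $gq=\{gQ:Q\in q\}$ recorded above, for $g\in G$ and $p\in G^*$ we have $gp\in\overline{A}$ if and only if $A\in gp$, i.e.\ $g^{-1}A\in p$. Hence $Gp\cap\overline{A}=\{gp:\ g^{-1}A\in p\}$, and after the substitution $g\mapsto g^{-1}$ the translates appearing in $Gp\cap\overline{A}$ are parametrised by the set $T_p=\{g\in G:\ gA\in p\}$. The only bridge I then need to the combinatorial side is the elementary remark that, for a \emph{finite} $F\subseteq G$, one has $F\subseteq T_p$ if and only if $\bigcap_{g\in F}gA\in p$.

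Granting for a moment that $|Gp\cap\overline{A}|=|T_p|$, both implications are short. For ``$A$ sparse $\Rightarrow$ $Gp\cap\overline{A}$ finite'' I argue by contraposition: if $T_p$ is infinite for some $p\in G^*$, then, taking $X=T_p$, every finite $F\subseteq X$ satisfies $\bigcap_{g\in F}gA\in p$, and this intersection is infinite because $p$ is free; thus no finite $F\subseteq X$ produces a finite intersection, so $A$ is not sparse. For the converse I again use the contrapositive: if $A$ is not sparse, fix an infinite $X\subseteq G$ such that $\bigcap_{g\in F}gA$ is infinite for every finite $F\subseteq X$; then the family $\{gA:g\in X\}$ together with the Fr\'echet filter of cofinite sets has the finite intersection property and extends to a free ultrafilter $p$, for which $X\subseteq T_p$, so $T_p$, and hence $Gp\cap\overline{A}$, is infinite.

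The step I expect to be the main obstacle is precisely the identity $|Gp\cap\overline{A}|=|T_p|$, that is, turning ``$T_p$ infinite'' into ``$Gp\cap\overline{A}$ infinite'': a priori infinitely many group elements could collapse onto a single translate $gp$, so one must know that the map $g\mapsto gp$ is injective. This is the assertion that the left action of $G$ on $G^*$ is free, i.e.\ that the stabiliser $\{s\in G:\ sp=p\}$ of a free ultrafilter is trivial. I would secure it by the classical three-colouring lemma: for $g\ne e$ one partitions $G$ into cells $C_1,C_2,C_3$ with $gC_i\cap C_i=\varnothing$; if $sp=p$ with $s\ne e$ and $C_i$ is the cell lying in $p$, then $s^{-1}C_i\in p$ as well, whence $C_i\cap s^{-1}C_i\in p$ is nonempty, contradicting $sC_i\cap C_i=\varnothing$. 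With freeness in hand the indexing $g\mapsto gp$ is bijective onto $Gp$, the two cardinalities agree, and the proposition follows; alternatively one simply cites the freeness of this action as the known fact already underlying Proposition 1.
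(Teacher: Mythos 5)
Your proof is correct, and since the paper offers no argument for this proposition (it only remarks that the proofs in this section ``can be easily extracted from corresponding definitions''), your write-up is essentially the intended one: translate $gp\in\overline{A}$ into $g^{-1}A\in p$, identify $Gp\cap\overline{A}$ with $T_p=\{g:gA\in p\}$, and run both contrapositives against the definition of sparseness (note that the paper's definition has a typo --- the finite set $F$ must be taken \emph{inside} the infinite set $X$, as in the cited source, and your argument correctly uses that version; without $F\subseteq X$ the proposition would fail). The one point where you go beyond a routine unwinding is exactly the right one: the direction ``not sparse $\Rightarrow$ $Gp\cap\overline{A}$ infinite'' genuinely needs injectivity of $g\mapsto gp$ on $G^*$, since infinitely many elements of $T_p$ could a priori yield the same translate of $p$; your appeal to the classical three-cell partition ($G=C_1\cup C_2\cup C_3$ with $gC_i\cap C_i=\varnothing$ for $g\ne e$, which exists for every group by splitting $\langle g\rangle$ and transporting along coset representatives) settles this correctly, and it is a detail the paper's breezy remark silently presupposes. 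The other direction, as you implicitly use, needs no injectivity, since $g\mapsto gp$ is at worst a surjection from $T_p$ onto $Gp\cap\overline{A}$.
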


\begin{proposition} A subset $A$ of a group $G$ can be partitioned in finite number of thin subsets if and only if each ultrafilter $p\in \overline{A}$ has a thin member.
\end{proposition}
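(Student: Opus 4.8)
The plan is to prove the two implications separately; the forward direction is immediate from the primeness of ultrafilters, while the reverse direction rests on the compactness of $\overline{A}$ in $\beta G$.

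For the \emph{only if} part, write $A = T_1 \cup \ldots \cup T_k$ with each $T_i$ thin and take $p \in \overline{A}$, so that $A \in p$. Since an ultrafilter is a prime filter, a finite union lying in $p$ forces one of its members into $p$; thus some $T_i \in p$, and $p$ has a thin member. This step uses nothing beyond the definition of an ultrafilter.

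For the \emph{if} part, suppose each $p \in \overline{A}$ has a thin member $P_p$. First I would replace $P_p$ by $P_p \cap A$, which is still in $p$ (as $A \in p$) and still thin, since a subset of a thin set is thin directly from the definition. Each $p$ lies in the basic open set $\overline{P_p}$ because $P_p \in p$, so $\{\overline{P_p} : p \in \overline{A}\}$ is an open cover of $\overline{A}$. As $\overline{A}$ is a closed, hence compact, subset of $\beta G$, I extract a finite subcover $\overline{A} \subseteq \overline{P_{p_1}} \cup \ldots \cup \overline{P_{p_k}}$.

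It then remains to descend from this cover of ultrafilters to a cover of the points of $A$, and the clean way to do this is via principal ultrafilters: for $a \in A$ the principal ultrafilter at $a$ contains $A$, hence lies in $\overline{A}$ and therefore in some $\overline{P_{p_i}}$, which says exactly that $a \in P_{p_i}$. Thus $A \subseteq P_{p_1} \cup \ldots \cup P_{p_k}$, and since each $P_{p_i} \subseteq A$ this is an equality. Disjointifying --- replacing $P_{p_i}$ by $P_{p_i} \setminus (P_{p_1} \cup \ldots \cup P_{p_{i-1}})$, which stays thin as a subset of a thin set --- produces the required finite partition of $A$ into thin subsets. There is no deep obstacle here; the only step needing care is this last translation back to sets, and compactness is precisely what collapses the a priori pointwise hypothesis (a thin member for each of the many ultrafilters in $\overline{A}$) into a single finite list of thin sets.
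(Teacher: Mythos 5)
Your proof is correct, and it is exactly the argument the paper intends: the paper omits proofs in this section (stating they ``can be easily extracted from corresponding definitions''), and your two steps --- primeness of ultrafilters for the forward direction, and compactness of the clopen set $\overline{A}$ in $\beta G$ plus the principal-ultrafilter descent for the converse --- constitute the standard extraction. The refinements you flag (intersecting the thin members with $A$, disjointifying at the end, noting subsets of thin sets are thin) are all sound.
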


Now take a group $G$ of cardinality $\aleph_\omega$ from Example 3 and corresponding 2-thin subset $A$. Since $A$ cannot be finitely partitioned into thin subsets, by Proposition 3, there is $p\in \overline{A}$ with no thin members. Hence, $p$ has a base consisting of 2-thin but not thin subsets.


\begin{thebibliography}{99}

\bibitem{b1} Lutsenko Ie., Protasov I.V., {\it Sparse, thin and other subsets of groups}, Intern. J. Algebra Computation, {\bf 19}(2009), 491--510.

\bibitem{b2} Lutsenko Ie., Protasov I.V. {\it Relatively thin and sparse subsets of groups} // Ukr. Math. J. - 2011. - {\bf 63}. -P.216--225.

\bibitem{b3} Lutsenko Ie., Protasov I. {\it Thin subsets of balleans} // Appl. Gen. Topology. - 2010. - 11, - No. 2. - P. 89--93.

\bibitem{b4} Protasov I.V., {\it Selective survey on Subset Combinatorics of Groups} // Ukr. Math. Bull. - 2010. -{\bf 7}.-P. 220-257.

\bibitem{b5} Protasov I. {\it Partitions of groups into thin subsets} // Algebra Discrete Math. - 2011. - {\bf 11}. P. 88--92.

\bibitem{b6} Banakh T., Lyaskovska N. {\it On thin complete ideals of subsets of groups} // Ukr. Math. J.

\bibitem{b7} Petrenko O., Protasov I.V. {\it Thin ultrafilters} // Notre Dame J. Formal Logic

\bibitem{b8} Protasov I., Zarichnyi M. General Asymptology. // Math. Stud. Monogr. Ser. - Lviv: VNTL Publishers, 2007 - Vol. 12

\bibitem{b9} Roe J. Lectures on Coarse Geometry // AMS University Lecture Series, Providence, RI, 2003, -Vol. 31.

\bibitem{b10} Engelking R. {\it General Topology}. - Warszawa: PWN, 1985.

\bibitem{b11} Simms J.C., {\it Another characterization of Alephs: decompositions of hyperspace} // Notre Dame J. Formal Logic. - 1997. - {\bf 38}. - P. 19--36.

\bibitem{b12} Davies R.O., {\it The power of the continuum on some propositions of the plane geometry} // Fundam. Math. - 1963. -{\bf 52}. - P. 277--281.

\bibitem{b13} Davies R.O., {\it On a problem of Erd\"{o}s concerning decomposition of the plane} // Proc. Cambridge Phil. Soc. - 1963. -59. - P. 33--36.

\bibitem{b14} Hindman N., Strauss D., {\it Algebra in the Stone-\v{C}ech compactification: Theory and Applications}. - Berlin, New York, Walter de Grueter, 1998.


\end{thebibliography}
\end{document}